\documentclass[10pt, reqno]{amsart}
\RequirePackage{amssymb,amsmath,amsthm,color,   graphicx} 

\RequirePackage[pdftex, colorlinks, colorlinks=true, citecolor=black, menucolor=black, linkcolor=black]{hyperref}
\hypersetup{
	colorlinks=true, 
	linkcolor=black, 
	urlcolor=blue, 
	linktoc=all 
}
\theoremstyle{plain}
\newtheorem{theorem}{Theorem}[section]
\newtheorem{proposition}[theorem]{Proposition}
\newtheorem{definition}[theorem]{Definition}
\newtheorem{lemma}[theorem]{Lemma}

\newtheorem*{theorem*}{Theorem}
\newtheorem*{proposition*}{Proposition}
\newtheorem*{definition*}{Definition}
\newtheorem*{lemma*}{Lemma}
\newtheorem*{corollary*}{Corollary}
\newtheorem*{example*}{Example}
\newtheorem*{remark*}{Remark}

\DeclareSymbolFont{yhlargesymbols}{OMX}{yhex}{m}{n} \DeclareMathAccent{\yhwidehat}{\mathord}{yhlargesymbols}{"62}
\numberwithin{equation}{section}

\newcommand{\C}{\mathbb{C}}
\newcommand{\R}{\mathbb{R}}

\renewcommand{\phi}{\varphi}

\DeclareMathOperator{\sech}{sech}

\newcommand{\qtq}[1]{\quad\text{#1}\quad}

\newcommand{\pt}{\partial}

\usepackage{enumitem}
\usepackage{amsrefs}

\begin{document}

\title[Soliton transmission]{Transmission of fast solitons for the NLS with an external potential}

\author[C. Hogan]{Christopher C. Hogan}

\address{Department of Mathematics \& Statistics, Missouri University of Science \& Technology, Rolla, MO, USA}
\email{cch62d@mst.edu}

\author[J. Murphy]{Jason Murphy}

\address{Department of Mathematics \& Statistics, Missouri University of Science \& Technology, Rolla, MO, USA 
\newline
\text{ } 
\newline 
\text{ }\quad Department of Mathematics, University of Oregon, Eugene, OR, USA}
\email{jamu@uoregon.edu}

\begin{abstract} We consider the dynamics of a boosted soliton evolving under the cubic NLS with an external potential.  We show that for sufficiently large velocities, the soliton is effectively transmitted through the potential.  This result extends work of Holmer, Marzuola, and Zworski \cite{holmerFastSolitonScattering2007}, who considered the case of a delta potential with no bound states, and work of Datchev and Holmer \cite{datchevFastSolitonScattering2009}, who considered the case of a delta potential with a linear bound state.  
\end{abstract}

\maketitle

\section{Introduction}

We consider one-dimensional cubic nonlinear Schr\"odinger equations (NLS) with an external potential: 
\begin{equation}\label{NLSV}
i\partial_t u = -\tfrac12\partial_x^2 u + Vu - |u|^2 u.
\end{equation}
As is well-known, the underlying cubic NLS 
\begin{equation}\label{NLS}
i\partial_t u = -\tfrac12\partial_x^2 u - |u|^2 u
\end{equation}
admits a family of solitary wave solutions of the form
\begin{equation}\label{soliton}
u(t,x)=e^{ixv+it/2-itv^2/2}\sech(x-x_0-vt),\quad x_0,v\in\R.
\end{equation}
In this work, we show that under suitable assumptions on the external potential, a highly boosted soliton evolving under \eqref{NLSV} can be effectively transmitted through the potential. 

We were inspired to consider this problem by the work of Holmer, Marzuola, and Zworski \cite{holmerFastSolitonScattering2007}, who considered \eqref{NLSV} in the special case of a repulsive delta potential (i.e. with $V=q\delta_0$ for some $q>0$).  We will discuss this and other related works in more detail below after stating our main result.

We first describe the class of potentials that we consider in this work.  We refer the reader to Section~\ref{S:prelim} for further discussion.

\begin{definition}[Admissible]\label{D:admissible} We say $V:\R\to\R$ is admissible if the operator $H:=-\tfrac12\partial_x^2+V$ obeys the following:
\begin{itemize}
\item $H$ has no zero energy resonance;
\item $H$ has at most one simple negative eigenvalue $-\lambda$, with corresponding eigenfunction $\varphi$ satisfying $\varphi\in L^1\cap L^\infty$; and 
\item the potential obeys the pointwise bound 
\[
|V(x)|\lesssim \langle x\rangle^{-s}\qtq{for some} s>2.
\]
\end{itemize}
We call $s$ the decay parameter of $V$. 
\end{definition}

Our main result is the following: 

\begin{theorem}\label{T} Let $V:\R\to\R$ be admissible with decay parameter $s>2$ and fix $\delta\in(\tfrac12,\tfrac{s}{1+s})$.  The following holds for all $v$ sufficiently large:

Let $u$ denote the solution to \eqref{NLSV} with
\[
u(0,x) = e^{ivx}\sech(x-x_0)\qtq{for some} x_0\leq -v^{1-\delta},
\]
and define the corresponding solitary wave solution to \eqref{NLS} by 
\[
u_1(t,x) = e^{ixv+it/2-itv^2/2}\sech(x-x_0-vt).
\]

Then
\begin{equation}\label{Teq}
\|u-u_1\|_{L_t^\infty L_x^2([0,(1-\delta)\log v]\times\R)} \lesssim v^{-(2\delta-1)}. 
\end{equation}
\end{theorem}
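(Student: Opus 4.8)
The plan is to pass to the frame moving with the soliton and then combine three ingredients: conservation laws to obtain $H^1$ bounds on the full solution with constants independent of $v$; the orbital stability of the $\sech$-soliton for the potential-free cubic NLS; and the scattering theory for the admissible operator $H$ to control the brief encounter with the potential.

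First I would perform the Galilean reduction. Set $w(t,x):=e^{-ivx-itv^2/2}u(t,x+vt)$, which is an $L^2_x$-isometry for each $t$ and obeys $\|u(t)-u_1(t)\|_{L^2_x}=\|w(t)-w_1(t)\|_{L^2_x}$, where $w_1(t,x)=e^{it/2}\sech(x-x_0)$ is the soliton at rest and $w$ solves $i\partial_t w=-\tfrac12\partial_x^2 w+V_v(t,x)w-|w|^2w$ with $V_v(t,x):=V(x+vt)$ and $w(0,x)=\sech(x-x_0)$. The advantage of this change of variables is that $\|w_1\|_{H^1}$ and $\|w(0)\|_{H^1}$ are $O(1)$ rather than $O(v)$, while the moving potential $V_v(t)$ is concentrated near $x=-vt$, hence far from the soliton (which sits near $x_0$) outside a time window of length $O(1/v)$ around $t_\ast:=|x_0|/v\gtrsim v^{-\delta}$; if $t_\ast>(1-\delta)\log v$ there is no interaction at all and the result is immediate, so I assume $t_\ast\le(1-\delta)\log v$ and split $[0,(1-\delta)\log v]$ into the interaction window and its (left and right) complement.

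Next I would establish the a priori bound $\|w(t)\|_{H^1}\lesssim 1$ on $[0,(1-\delta)\log v]$. Mass conservation gives $\|w(t)\|_{L^2}\equiv\sqrt2$. For the gradient I use the moving-frame energy $\mathcal E(t):=\tfrac14\|\partial_x w\|_{L^2}^2+\tfrac12\int V_v(t)|w|^2-\tfrac14\|w\|_{L^4}^4$; although $\mathcal E$ is not conserved, one computes
\[
\tfrac{d}{dt}\mathcal E(t)=\tfrac v2\int (\partial_x V)(x+vt)\,|w|^2\,dx=-v\,\Re\!\int V(x+vt)\,\bar w\,\partial_x w\,dx,
\]
where the second identity is an integration by parts (legitimate since we only assume $|V|\lesssim\langle x\rangle^{-s}$, with no control on $V'$). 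Bounding $|\tfrac{d}{dt}\mathcal E(t)|\lesssim v\,\|V_v(t)w(t)\|_{L^2}\,\|\partial_x w(t)\|_{L^2}$, using the localization estimate $\|V_v(t)w(t)\|_{L^2}\lesssim\langle x_0+vt\rangle^{-s}(1+\|w(t)\|_{H^1})$ (split the integral at the midpoint between $-vt$ and the soliton center, using the $\langle x\rangle^{-s}$ decay of $V$ on one side and the exponential tail of the soliton on the other), and running a continuity argument, the time integral telescopes:
\[
\int_0^{(1-\delta)\log v}\!\! v\,\langle x_0+vt\rangle^{-s}\,dt=\int_{x_0}^{x_0+v(1-\delta)\log v}\!\!\langle y\rangle^{-s}\,dy\le\int_{\R}\langle y\rangle^{-s}\,dy<\infty,
\]
since $s>2>1$. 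Hence $\|w(t)\|_{H^1}\lesssim1$, and so $\|w(t)\|_{L^\infty}\lesssim1$, with constants independent of $v$.

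With these bounds in hand I would compare $w$ to $w_1$. The error $\epsilon:=w-w_1$ solves $i\partial_t\epsilon+\tfrac12\partial_x^2\epsilon=-(|w|^2w-|w_1|^2w_1)+V_v(t)w_1+V_v(t)\epsilon$ with $\epsilon(0)=0$. One cannot simply apply Duhamel and Gronwall: the cubic term contributes a factor $e^{Ct}$ with $C\gtrsim\|w_1\|_{L^\infty}^2\gtrsim1$, which over $t\sim(1-\delta)\log v$ is $v^{C(1-\delta)}$ — too large once $\delta$ is bounded away from $1$. Instead, on the complement of the interaction window the equation for $w$ is a small perturbation of the potential-free cubic NLS, and I invoke the orbital stability of the $\sech$-soliton (Weinstein, Grillakis–Shatah–Strauss): the linearized operator about $\sech$ has no spectrum off the imaginary axis (the flow is $L^2$-subcritical), so the only secular growth is linear-in-$t$ along the finite-dimensional generalized kernel generated by the phase, translation, scaling, and Galilean symmetries. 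Absorbing those directions into modulation parameters $\gamma(t),y(t)$ — bounding the remainder by the coercive Lyapunov functional $E+\lambda_0M$, whose drift under $V_v$ telescopes exactly as above with net change controlled by the initial/final soliton position (so $\lesssim v^{-s(1-\delta)}$) — the remainder stays correspondingly small, while $|\dot y-\nu|,|\dot\nu|,|\dot\gamma-\tfrac12|\lesssim\|V_v(t)w(t)\|_{L^2}+\|\text{remainder}\|^2$ keeps $|y(t)-x_0|$ and $|\gamma(t)-t/2|$ controlled over the time interval. On the interaction window I drop the nonlinearity (negligible over time $O(1/v)$) and compare $w$ to $e^{-itH}$ applied to the soliton profile; admissibility furnishes the resolvent/scattering bounds showing the transmitted part equals the free soliton up to $O(v^{-1})$ and the reflected part has $L^2$-mass that is small at high velocity, while the linear bound state enters only through the overlap $\langle e^{ivx}\sech(\cdot-x_0),\varphi\rangle$: since $\varphi\in L^1\cap L^\infty$ solves $-\tfrac12\varphi''+V\varphi=-\lambda\varphi$ we get $\varphi''\in L^1$, hence $|\widehat\varphi(\xi)|\lesssim\langle\xi\rangle^{-2}$, and a stationary-phase/convolution estimate gives $|\langle e^{ivx}\sech(\cdot-x_0),\varphi\rangle|\lesssim v^{-2}$ uniformly in $x_0$. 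Collecting the contributions — core crossing $O(v^{-1})$, potential tail $O(v^{-s(1-\delta)})$, bound-state overlap $O(v^{-2})$, and parameter drift over the interval $[0,(1-\delta)\log v]$ — and using $\delta\in(\tfrac12,\tfrac{s}{1+s})$ to verify that each term is $\lesssim v^{-(2\delta-1)}$, one obtains \eqref{Teq}.

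The main obstacle is the logarithmically long time: one must run the stability estimate over $[0,(1-\delta)\log v]$ without an exponential loss, which rules out a naive Gronwall against the cubic nonlinearity and forces a genuine use of the soliton's stability structure (projecting off the symmetry modes, so that the remaining linearized flow grows at most polynomially in $t$) carried out simultaneously with the bootstrap for the a priori $H^1$ bound — the two estimates are coupled. A secondary technical nuisance is the complete absence of pointwise control on $V'$, which forces the integration-by-parts manoeuvres above and means every "force on the soliton" estimate must be phrased through $\int V(\cdot+vt)\,\bar w\,\partial_x w$ rather than $\int V'(\cdot+vt)|w|^2$; it is precisely the interplay of the decay parameter $s$ with the time length $(1-\delta)\log v$ and the starting distance $v^{1-\delta}$ that pins down the admissible range of $\delta$ and the exponent $2\delta-1$.
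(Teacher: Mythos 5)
Your proposal takes a genuinely different route from the paper (Galilean reduction to the soliton frame, almost-conservation of a moving-frame energy, orbital stability with modulation, and linear scattering theory for the interaction window), whereas the paper simply writes $w=u-u_1$, treats \eqref{weq} as a forced small-data NLS with potential, and iterates a Duhamel/Strichartz estimate (Proposition~\ref{P:iterate}, split into $P_c$ and $P_d$ components) over $O(\log v)$ unit-length intervals, with the forcing $Vu_1$ controlled by Lemma~\ref{L:error}. The paper needs no $H^1$ bounds, no orbital stability, and no analysis of $e^{-itH}$ beyond the Strichartz estimate; the exponent $2\delta-1$ arises mechanically because the $v^{-\delta}$ error created in the interaction window is amplified by a fixed multiplicative constant at each of the $\sim(1-\delta)\log v$ post-interaction iterations. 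Your route, if completed, could in principle avoid that exponential-in-time loss, but as written it has genuine gaps.

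The load-bearing step in your argument is the localization estimate $\|V_v(t)w(t)\|_{L^2}\lesssim\langle x_0+vt\rangle^{-s}(1+\|w(t)\|_{H^1})$, used both for the a priori energy drift and for the modulation equations, and it is not justified: $w$ is the unknown solution, not the soliton, and an $H^1$ bound gives no spatial decay away from $x_0$, so ``the exponential tail of the soliton'' is not available for $w$. The obvious bootstrap does not close: controlling only $\|w-w_1\|_{L^2}\leq\eta$ gives $\|V_v(w-w_1)\|_{L^2}\leq\|V\|_{L^\infty}\eta$ with no gain in $\langle x_0+vt\rangle$, and then $\int v\,\eta\,\|\partial_x w\|_{L^2}\,dt\sim v\eta\log v$ over the full interval, which is enormous for $\eta\sim v^{-(2\delta-1)}$ with $\delta<1$; one would need weighted decay estimates on $w$ itself, a substantial missing piece. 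Even granting the estimate, the telescoped integral outside the interaction window is $v^{-(s-1)(1-\delta)}$ rather than $v^{-s(1-\delta)}$, and the coercive Lyapunov functional controls only the \emph{square} of the $H^1$ distance, so the resulting bound $v^{-(s-1)(1-\delta)/2}$ fails to reach $v^{-(2\delta-1)}$ on part of the range $\delta<\tfrac{s}{1+s}$. Separately, the interaction window has length $2v^{-\delta}$, not $O(1/v)$, so ``dropping the nonlinearity'' there already costs $O(v^{-\delta})$ in $L^2$ (this is precisely the dominant error in the correct proof, and it must then be propagated for time $(1-\delta)\log v$ --- the step your outline never performs); and the assertion that admissibility ``furnishes'' the statement that $e^{-itH}$ applied to the boosted soliton is the free transmitted soliton up to $O(v^{-1})$ plus a small reflected part is itself a theorem requiring proof for general algebraically decaying potentials --- it is the hardest part of the Holmer--Marzuola--Zworski analysis even for the delta potential, and nothing in the proposal establishes it.
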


Note that since $u_1$ is initially centered at $x_0\leq -v^{1-\delta}$ and travels at speed $v$ and the potential is concentrated near $x=0$, the soliton has effectively `moved past' the potential after times $t\geq \tfrac{|x_0|}{v}+1$; indeed, in this regime we have $x_0+vt\geq 1$.  We remark that the approximation in \eqref{Teq} holds well beyond this time.

Soliton scattering by external potentials is a problem of both mathematical and physical interest (see e.g. \cite{holmerFastSolitonScattering2007, datchevFastSolitonScattering2009, grimshawSolitonInterferometryVery2022, walesSplittingRecombinationBrightsolitarymatter2020, perelmanRemarkSolitonpotentialInteractions2009, floerNonspreadingWavePackets1986, bronskiSolitonDynamicsPotential2000, goodmanStrongNLSSolitondefect2004, CaoMalomed}).  The first mathematical works directly studying this phenomenon seem to be \cite{holmerFastSolitonScattering2007, datchevFastSolitonScattering2009}, which treat the case of the delta potential (in both the repulsive and attractive cases).  On the other hand, in the physical setting one often encounters more general external potentials. Thus it is of both mathematical and physical interest to extend the results of \cite{holmerFastSolitonScattering2007, datchevFastSolitonScattering2009} to more general potentials.  In this work, we are able to address algebraically decaying potentials (as in \cite{grimshawSolitonInterferometryVery2022}, for example), including potentials admitting a linear bound state.  Presently, our analysis is restricted to the high velocity regime in which the leading order behavior is simply transmission of the soliton.

More generally, one expects some component of the soliton to be transmitted through the potential and some component to be reflected by the potential.  Indeed, such a decomposition was obtained in \cite{holmerFastSolitonScattering2007, datchevFastSolitonScattering2009} for the case of the delta potential (i.e. $V=q\delta_0$ with $q\in\R\backslash\{0\}$). In particular, for $\tfrac{q}{v}$ fixed and $v$ sufficiently large, the work \cite{holmerFastSolitonScattering2007} shows that for times $t\in[\tfrac{|x_0|}{v}+1,c\log v]$ (i.e. after the soliton-potential interaction time), one can write
\begin{equation}\label{HMZ}
u(t) = u_T(t) + u_R(t) + \mathcal{O}_{L^2}(v^{-a})
\end{equation}
for some $a\in(0,\tfrac12)$. Here $u_T,u_R$ are the (appropriately translated and boosted) solutions to \eqref{NLS} with data given by $T(v)\sech$ and $R(v)\sech$, where $T$ and $R$ are the translation and reflection coefficients arising in the scattering theory for the underlying Schr\"odinger operator $-\tfrac12\partial_x^2+q\delta_0$.  Utilizing the inverse scattering theory for \eqref{NLS}, the authors of \cite{holmerFastSolitonScattering2007} further provide a precise description of $u_T$ and $u_R$ (up to an additional $L^\infty$ error term).  In \cite{datchevFastSolitonScattering2009}, the authors followed up on the work of \cite{holmerFastSolitonScattering2007} and established an analogous result in the setting of an attractive delta potential, in which case one must also contend with the presence of a linear bound state.

In our setting, we fix the external potential and, in order to close several perturbative arguments, we work in the large velocity regime  $v\gg 1$. In this setting, the transmission and reflection coefficients in the scattering theory for $-\tfrac12\partial_x^2+V$ obey the asymptotics
\[
T(v) = 1 + \mathcal{O}(v^{-1}) \qtq{and} R(v) = \mathcal{O}(v^{-1}) \qtq{as}v\to\infty
\]
(see e.g. \cite[Equation~(2.36)]{naumkinSharpAsymptoticBehavior2016}).  In particular, even if we track the reflected component of the solution, it is likely that this component is already smaller than the error term in \eqref{Teq}, which is of size $v^{-(2\delta-1)}$ for some $\delta<1$.  Similarly, replacing the transmitted component by the full soliton incurs an error smaller than the error term in \eqref{Teq}.  Thus we do not track these components individually; instead, we consider the full soliton as the leading order term. 

This observation ultimately simplifies the arguments needed to establish the result appearing in Theorem~\ref{T} (compared to the more refined decompositions in \cite{holmerFastSolitonScattering2007, datchevFastSolitonScattering2009}).  Indeed, the authors of \cite{holmerFastSolitonScattering2007} rely on an analysis of the linear propagation of boosted data (in the presence of the potential) to witness the appearance of the transmission and reflection coefficients during the `interaction phase' of the evolution (e.g. when $t\in[\tfrac{|x_0|}{v}-v^{-\delta},\tfrac{|x_0|}{v}+v^{-\delta}]$). In our setting, the entire interaction phase is treated perturbatively, essentially relying only on the fact that the duration of the interaction is short if the velocity is large.  Subsequently, we do not need to use inverse scattering to understand the NLS evolution of $\alpha\cdot\sech$ for $\alpha\in(0,1)$ as in \cite{holmerFastSolitonScattering2007}.  Instead, we need only understand the NLS evolution of $\sech$, which is simply the soliton \eqref{soliton}. 

In light of this discussion, one can see directly that Theorem~\ref{T} actually applies with any traveling wave solution to \eqref{NLS} (decaying as $|x|\to\infty$), rather than just the ground state soliton. For example, the result in Theorem~\ref{T} applies to excited solitons as well. We also assert that the analysis in this paper would apply equally well in the setting of finitely many linear bound states corresponding to negative eigenvalues (with appropriate modifications to the proofs of Theorem~\ref{T:GWP} and Proposition~\ref{P:iterate}).

We would also like to mention the related work of Perelman \cite{perelmanRemarkSolitonpotentialInteractions2009}, who considered one-dimensional NLS equations with exponentially decaying potentials admitting no bound states and generalized higher order nonlinearities of quintic type. In this work, the author established a decomposition into transmitted and reflected components, similar to the type established in \cite{holmerFastSolitonScattering2007}. In fact, the decomposition in \cite{perelmanRemarkSolitonpotentialInteractions2009} was proven to hold \emph{globally} in time, which represents a considerable improvement over the results of \cite{holmerFastSolitonScattering2007, datchevFastSolitonScattering2009} and the present paper.  We note, however, that the techniques in \cite{perelmanRemarkSolitonpotentialInteractions2009} rely in an essential way on the quintic nature of the nonlinearity. Indeed, for quintic and higher nonlinearities it is possible to utilize Strichartz estimates directly to run perturbative arguments globally in time.  The quintic nonlinearity also allows for the use of the pseudoconformal transformation, which plays a role in the analysis of \cite{perelmanRemarkSolitonpotentialInteractions2009}.  Establishing a global-in-time decomposition in the cubic setting remains an interesting but challenging problem. 

For some other works sharing similarities with the problem considered here, we refer the interested reader to \cite{floerNonspreadingWavePackets1986, bronskiSolitonDynamicsPotential2000, deiftLongtimeAsymptoticsSolutions2011, goodmanStrongNLSSolitondefect2004, holmerBreathingPatternsNonlinear2009, holmerSlowSolitonInteraction2007, holmerSolitonInteractionSlowly2008, holmerSolitonSplittingExternal2007, CaoMalomed}.  

Our proof of Theorem~\ref{T} follows the general scheme given in \cite{holmerFastSolitonScattering2007}.  In particular, our proof is based on iterating a perturbative result for the difference
\[
w(t):=u(t)-u_1(t),
\]
where $u_1$ is the NLS soliton.  This function satisfies an equation of the form
\begin{equation}\label{weqi}
i\partial_t w + \tfrac12\partial_x^2 w - Vw = -|w|^2 w + \mathcal{O}(w^2 u_1) + \mathcal{O}(wu_1^2) - Vu_1,
\end{equation}
with $w|_{t=0}=0$ (see Proposition~\ref{P:iterate}). Regarding \eqref{weqi} as a forced NLS with lower order perturbations, the proof of Theorem~\ref{T} relies essentially on proving $L_t^\infty L_x^2$ bounds for such equations in the small-data setting. 

As $u_1$ is smooth and decaying, the quadratic terms in \eqref{weqi} pose no essential difficulty.  The linear terms, on the other hand, cannot be  incorporated into the standard small-data continuity arguments unless one restricts to short time intervals.  In particular, we will ultimately obtain the estimate in \eqref{Teq} by iterating over small intervals.  The essential term in \eqref{weqi} is therefore the forcing term $Vu_1$, representing the interaction between the soliton and the potential.  Applying Strichartz estimates for the underlying Schr\"odinger equation with potential, one finds it suffices to obtain suitable estimates for $Vu_1$ in $L_t^1 L_x^2$. We refer the reader to Proposition~\ref{P:iterate} for more details. 

At this point, we follow the lead of \cite{holmerFastSolitonScattering2007} and consider three `phases': 
\begin{itemize}
\item[(i)] the pre-interaction phase $t\leq \tfrac{|x_0|}{v}-v^{-\delta}$, when the soliton has not yet reached the potential; 
\item[(ii)] the interaction phase $t\in[\tfrac{|x_0|}{v}-v^{-\delta},\tfrac{|x_0|}{v}+v^{-\delta}]$; and 
\item[(iii)] the post-interaction phase $t\geq \tfrac{|x_0|}{v}+v^{-\delta}$, when the soliton has moved beyond the potential.
\end{itemize}

In phases (i) and (iii), the quantity $Vu_1$ is small in $L_t^1 L_x^2$ due to the decay of the soliton and potential together with the fact that $|x_0+tv|>v^{1-\delta}$ during these phases. In phase (ii), the quantity is small due to the brevity of the time interval. 

In phase (i), the error is initially zero and the quantity $Vu_1$ obeys a bound of order $v^{-s(1-\delta)}$. At the end of this phase, we obtain a bound of $v^{-s(1-\delta)}$ for the difference $w$.  

In phase (ii), the quantity $Vu_1$ obeys a bound of order $v^{-\delta}$ (the length of the time interval). This bound again yields the size of the error at the end of this phase. 

In phase (iii), the quantity $Vu_1$ is initially of order $v^{-s(1-\delta)}$ and actually obeys improving bounds as $t$ grows and the soliton continues to move away from the potential. However, as we must propagate over short time intervals and our error estimate increases by a fixed constant with each iteration, we can only propagate $\sim\log v$ times before our errors grow out of control. In particular, we begin with an error of size $v^{-\delta}$ and propagate over an interval of length $(1-\delta)\log v$, leading to a final error estimate of $v^{-(2\delta-1)}$. 

The rest of this paper is organized as follows:  In Section~\ref{S:prelim}, we introduce notation and collect some preliminary results.  In Section~\ref{S:iteration}, we give the proof of the main result, Theorem~\ref{T}. 

\subsection*{Acknowledgements}

J.M. and C.H. were supported by NSF grant DMS-2137217.  We are grateful to Maciej Zworski for helpful discussions related to this problem.

\section{Preliminaries}\label{S:prelim}

We write $A\lesssim B$ to denote $A\leq CB$ for some $C>0$.  We write $L_t^q L_x^r$ for the standard mixed Lebesgue spaces, e.g.
\[
\|u\|_{L_t^q L_x^r(I\times\R)} = \bigl\|\,\|u(t)\|_{L_x^r(\R)}\,\|_{L_t^q(I)}. 
\]
We may omit the domain (i.e. $I\times\R$ or $\R$) when it is clear from context.  We use primes to denote H\"older duals, e.g. for $r\in[1,\infty]$ we let $r'\in[1,\infty]$ denote the solution to $\tfrac{1}{r}+\tfrac{1}{r'}=1$.

Let $H=-\tfrac12\partial_x^2 + V$.  A key ingredient in our analysis is the Strichartz estimate adapted to the linear Schr\"odinger equation $i\partial_t u = Hu$.  

To state the result precisely, we briefly introduce the notion of a zero energy resonance.  To this end, we recall the notion of \emph{Jost solutions}, which are the solutions $f_\pm(\lambda,\cdot)$ to 
\begin{equation}\label{ODE}
-\tfrac12\partial_x^2 f_\pm + Vf_\pm = \tfrac12\lambda^2 f_\pm
\end{equation}
obeying 
\[
\lim_{x\to\pm\infty} [f_\pm(\lambda,x)-e^{\pm i\lambda x}] = 0.
\]
If $V$ decays sufficiently rapidly, solutions exist for all $\lambda\in\R$. We further define the Wronskian of $f_\pm$ by
\[
W(\lambda) = f_+(\lambda,x)\partial_x f_-(\lambda,x) - f_-(\lambda,x)\partial_x f_+(\lambda,x), 
\]
which (by \eqref{ODE}) is in fact independent of $x$. While it is known that $W(\lambda)\neq 0$ for all $\lambda\neq 0$, whether or not $W(0)=0$ depends on the potential $V$.

\begin{definition}\label{def:resonance} We say $H$ has a zero energy resonance if and only if $W(0)=0$.
\end{definition}

Throughout this paper, we consider potentials that are admissible in the sense of Definition~\ref{D:admissible}.  In particular, we assume that $H$ has no zero energy resonance.  On the other hand, we do allow for the possibility of a linear bound state (i.e. a negative eigenvalue of $H$). Writing $\varphi$ for the $L^2$-normalized bound state, we denote the projections onto the discrete/continuous spectrum by
\[
P_d f = \langle f,\varphi\rangle \varphi,\qtq{and} P_c f = f-P_d f,
\]
respectively.  Here $\langle\cdot,\cdot\rangle$ denotes the standard $L^2$ inner product.

Recall from Definition~\ref{D:admissible} that

\begin{proposition}[Strichartz estimates, \cite{goldbergDispersiveEstimatesSchrodinger2004, wederLLp2000}]\label{Strichartz}

Suppose $V:\R\to\R$ satisfies $\langle x\rangle V\in L^1$ and $H:=-\tfrac12\partial_x^2 + V$ has no zero energy resonance.  Let $t_0\in\R$ and $I\ni t_0$ be a time interval.  

For any $(q,r)$, $(\tilde q,\tilde r)$ satisfying $4\leq q,\tilde q\leq\infty$ and $\tfrac{2}{q}+\tfrac{1}{r}=\tfrac{2}{\tilde q}+\tfrac{1}{\tilde r}=\tfrac{1}{2}$, we have
\begin{align*}
\|P_c e^{-itH}\varphi\|_{L_t^q L_x^r(I\times\R)}&\lesssim \|\varphi\|_{L^2}, \\
\biggl\| \int_{t_0}^t P_c e^{-i(t-s)H}F(s)\,ds\biggr\|_{L_t^q L_x^r(I\times\R)} & \lesssim \|F\|_{L_t^{\tilde q'}L_x^{\tilde r'}(I\times\R)}.
\end{align*}
\end{proposition}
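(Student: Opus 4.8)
The plan is to assemble these estimates from the two standard ingredients behind any Strichartz estimate --- a fixed-time dispersive decay bound for the propagator and the $L^2$-conservation bound --- and then invoke the abstract framework of Keel and Tao. The only genuinely analytic input is the dispersive estimate, which we borrow from \cite{goldbergDispersiveEstimatesSchrodinger2004, wederLLp2000}.

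First I would record the fixed-time estimates. Since $H$ is self-adjoint and $P_c$ is an orthogonal projection commuting with $e^{-itH}$, one has $\norm{P_c e^{-itH}f}{L^2}\leq\norm{f}{L^2}$ for all $t$. For the dispersive bound, observe that admissibility gives $|V(x)|\lesssim\jb{x}^{-s}$ with $s>2$, hence $\jb{x}V\in L^1$; together with the standing assumption that $H$ has no zero energy resonance (and the fact that a short-range one-dimensional Schr\"odinger operator has no zero eigenvalue), the hypotheses of \cite{goldbergDispersiveEstimatesSchrodinger2004, wederLLp2000} are satisfied, so that
\[
\norm{P_c e^{-itH}f}{L_x^\infty}\lesssim |t|^{-1/2}\norm{f}{L_x^1},\qquad t\neq0,
\]
and, interpolating with the $L^2$ bound, $\norm{P_c e^{-itH}f}{L_x^r}\lesssim|t|^{-(\frac12-\frac1r)}\norm{f}{L_x^{r'}}$ for $r\in[2,\infty]$.

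Next I would feed this into the abstract Strichartz theorem of Keel--Tao applied to $U(t):=P_c e^{-itH}$. Since $P_c^\ast=P_c=P_c^2$ and $P_c$ commutes with $e^{-itH}$, we have $U(t)U(s)^\ast=P_c e^{-i(t-s)H}$, so the required hypotheses $\norm{U(t)}{L^2\to L^2}\lesssim1$ and $\norm{U(t)U(s)^\ast}{L^1\to L^\infty}\lesssim|t-s|^{-1/2}$ hold with decay exponent $\sigma=\tfrac12$. The pairs in the statement are exactly the $\tfrac12$-admissible ones: the relation $\tfrac2q+\tfrac1r=\tfrac12$ with $r\leq\infty$ forces $q\geq4$, and because the endpoint excluded in Keel--Tao occurs only when $\sigma=1$, the one-dimensional endpoint $(q,r)=(4,\infty)$ is allowed. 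The theorem then gives the homogeneous estimate over $\R\times\R$ --- hence, by restriction, over $I\times\R$ --- and the retarded inhomogeneous estimate for $\int_{s<t}P_c e^{-i(t-s)H}F(s)\,ds$ between any two admissible pairs; the truncation to $\int_{t_0}^t$ and the restriction to $I$ are obtained in the usual way by replacing $F$ with $\mathbf 1_I F$ and treating the forward and backward (time-reversed) portions of $I$ separately.

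For this proposition there is no substantial obstacle: the argument is essentially bookkeeping on top of cited results, and the only points needing care are verifying that admissibility supplies the decay and no-resonance conditions under which the dispersive estimate is known, and noting that the one-dimensional endpoint pair lies inside the Keel--Tao range. The genuinely hard analysis --- the low-energy resolvent expansion and oscillatory-integral estimates behind the $|t|^{-1/2}$ decay --- is precisely what is being imported from \cite{goldbergDispersiveEstimatesSchrodinger2004, wederLLp2000}.
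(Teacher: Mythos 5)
The paper does not prove this proposition; it is quoted directly from \cite{goldbergDispersiveEstimatesSchrodinger2004, wederLLp2000}, and your derivation --- importing the $|t|^{-1/2}$ dispersive bound for $P_c e^{-itH}$ from those references and running the standard $TT^*$/Keel--Tao machinery with $\sigma=\tfrac12$, where the admissibility relation $\tfrac2q+\tfrac1r=\tfrac12$ forces $q\geq 4$ and no endpoint issue arises --- is exactly the standard route the citation is standing in for, and it is correct. The only cosmetic point is that the proposition's hypothesis is $\langle x\rangle V\in L^1$ directly, so there is no need to pass through the admissibility definition to verify it.
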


We remark that the decay assumptions on $V$ in Definition~\ref{D:admissible} are sufficient to apply the estimates in Proposition~\ref{Strichartz}. 

Utilizing these Strichartz estimates, we can establish global well-posedness for \eqref{NLSV} with initial data in $L^2$:

\begin{theorem}[Global well-posedness]\label{T:GWP} Let $u_0\in L^2$.  Then there exists a unique, global solution $u:\R\times\R\to\C$ to \eqref{NLSV} with $u|_{t=0}=u_0$.  The solution satisfies the conservation of mass, i.e.
\[
M[u(t)]:=\int |u(t,x)|^2\,dx \equiv M[u_0]. 
\]
If $u_0\in H^1$, then the solution additionally satisfies the conservation of energy, i.e.
\[
E[u(t)]:=\int \tfrac14|\nabla u(t,x)|^2 + \tfrac12 |V(x)u(t,x)|^2 - \tfrac14|u(t,x)|^4\,dx \equiv E[u_0].
\]
\end{theorem}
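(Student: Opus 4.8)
The plan is to treat \eqref{NLSV} as a mass-subcritical semilinear Schr\"odinger equation and run the standard contraction-mapping and energy-method arguments with $e^{-itH}$ replacing the free propagator. The only structural wrinkle is that Proposition~\ref{Strichartz} is stated for the continuous spectral projection $P_c$; I would handle the (at most rank-one) discrete part separately, using the explicit formula $P_de^{-itH}f=e^{it\lambda}\langle f,\varphi\rangle\varphi$. Since $\varphi\in L^1\cap L^\infty\subset L^r$ for all $r\in[2,\infty]$, over any bounded time interval $I$ every mixed space-time norm of $P_de^{-itH}f$ --- and of the corresponding Duhamel term --- is dominated, up to a constant, by $|I|^{1/q}\|f\|_{L^2}$, so the discrete part poses no obstruction and one may argue as if only $P_c$ were present.

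For the local theory in $L^2$ I would write the Duhamel formula for \eqref{NLSV} and, using the admissible pair $(q,r)=(6,6)$ (which satisfies $\tfrac2q+\tfrac1r=\tfrac12$ with $q\geq4$), the identity $\bigl\||u|^2u\bigr\|_{L^2_x}=\|u\|_{L^6_x}^3$, and H\"older in time, obtain the mass-subcritical bound
\[
\bigl\||u|^2u\bigr\|_{L^1_tL^2_x(I\times\R)}\lesssim|I|^{1/2}\,\|u\|_{L^6_tL^6_x(I\times\R)}^3 .
\]
Inserting this into the inhomogeneous Strichartz estimate (with the forcing term measured in $L^1_tL^2_x$) and running a contraction in a ball of $C_tL^2_x\cap L^6_tL^6_x$ produces a unique solution on an interval $[0,T]$ with $T=T(\|u_0\|_{L^2})$, together with Lipschitz dependence of the solution on the data over bounded subsets of $L^2$.

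Next I would establish conservation of mass and globalize. For data in $D(H)=H^2$ --- note that $V$ is bounded (since $|V|\lesssim\langle x\rangle^{-s}$), so $H$ is self-adjoint on $H^2$ and $e^{-itH}$ preserves this domain --- the solution is classical, and differentiating $\|u(t)\|_{L^2}^2$ along the flow, using that $\langle Hu,u\rangle$ is real (self-adjointness) and $\langle|u|^2u,u\rangle=\|u\|_{L^4}^4$ is real, gives $\tfrac{d}{dt}M[u(t)]=0$; by the Lipschitz continuity recorded above, this passes to arbitrary $u_0\in L^2$. Since the local existence time depends only on the conserved mass, the local solution iterates over time steps of a fixed length to yield a solution on all of $\R$ --- unique on each compact interval, and hence global.

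Finally, for $u_0\in H^1$ I would upgrade the local theory: boundedness of $V$ gives $\langle f,(H+N)f\rangle\sim\|f\|_{H^1}^2$ for $N$ large, so $H^1=D((H+N)^{1/2})$ with an equivalent norm, $(H+N)^{1/2}$ commutes with $e^{-itH}$, and applying it to Duhamel while using $\bigl\|(H+N)^{1/2}(|u|^2u)\bigr\|_{L^2}\sim\bigl\||u|^2u\bigr\|_{H^1}\lesssim\|u\|_{L^\infty}^2\|u\|_{H^1}$ and $H^1(\R)\hookrightarrow L^\infty(\R)$ closes a short-time $H^1$ contraction. On such an interval $E[u(t)]$ is finite, a direct computation with the equation gives $\tfrac{d}{dt}E[u(t)]=0$ for classical solutions, and this passes to $H^1$ data by density; combining conservation of $M$ and $E$ with the one-dimensional Gagliardo--Nirenberg inequality then yields an a priori bound on $\|u(t)\|_{H^1}$, so the $H^1$ solution is global as well. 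I do not expect a genuine obstacle here: the argument is a routine application of the Strichartz bounds of Proposition~\ref{Strichartz}, and the only delicate points are bookkeeping --- the (trivial) separate treatment of the rank-one bound-state component of the flow, and the density arguments transferring the conservation laws from classical to rough data, which rely on the quantitative continuity of the solution map from the contraction estimate.
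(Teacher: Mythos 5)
Your proposal is correct and matches the paper's argument in all essentials: a short-time contraction built on the $P_c$ Strichartz bounds of Proposition~\ref{Strichartz}, with the rank-one discrete component handled by the trivial estimate coming from $\varphi\in L^1\cap L^\infty$, followed by iteration in time via mass conservation. The paper merely packages the spectral splitting differently --- as a coupled system for $(P_c u,\langle u,\varphi\rangle)$ contracted in a product metric space, rather than a single Duhamel map with the propagator split into $P_ce^{-itH}+P_de^{-itH}$ --- and it omits the conservation-law, globalization, and $H^1$ details that you sketch, all of which are standard.
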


Although Theorem~\ref{T:GWP} is a fairly standard subcritical well-posedness result (see e.g. the textbook of Cazenave \cite{cazenaveSemilinearSchrodingerEquations2003}), the presence of the linear bound state requires a slight modification of the usual proof.  Thus, we provide a sketch of local well-posedness here.

\begin{proof}[Sketch of the proof of Theorem~\ref{T:GWP}]  We prove local well-posedness up to some time $T=T(\|u_0\|_{L^2})$.  Using conservation of mass, we can then iterate to obtain the global-in-time result.

We denote $H = -\frac{1}{2}\partial_x^2 + V$ and $M = \|u_0\|_{L^2_x}$. Throughout, we will let $(-\lambda,\phi)$ denote the negative eigenvalue and linear bound state pair of $H$. We let $I = [0,T]$, where $T=T(M)$ will be determined below. We define the complete metric space $(B,d)$ by
\[
B = \{(f,a) : \|f\|_{L_t^\infty L_x^2\cap L_{t,x}^6} \leq 2CM,\quad \|a\|_{L^{\infty}_t} \leq 2CM\}
\]
where 
\[
d((f,a), (g,b)) = \|f - g\|_{L^{\infty}_tL^2_x} + \|a - b\|_{L^{\infty}_t},
\]
all space-time norms are over $I\times\R$, and $C\geq 1$ encodes implicit constants appearing in the estimates below (e.g. the implicit constants in Strichartz estimates and norms of the bound state). 

We study the local well-posedness of the coupled system
\begin{equation}\label{NLSVsys}
\begin{cases}
i\pt_t f = H f - P_c\bigl[|f + a \varphi|^2 (f + a \varphi)\bigr] \\
i\pt_t a = -\lambda a - \langle|f + a \varphi|^2 (f + a \varphi), \varphi \rangle,
\end{cases}
\end{equation}
with initial data $f|_{t=0} = f_0 = P_c u_0$ and $a|_{t=0} = a_0 = \langle u_0, \varphi \rangle$.  After constructing a solution $(f,\varphi)$ to \eqref{NLSVsys}, the solution to \eqref{NLSV} is then given by $u=f+a\varphi$, i.e. $f=P_c u$ and $a\varphi=P_d u$.

We define the map
\[
\Phi[f,a] = \bigl(\Phi_1[f,a], \Phi_2[f,a]\bigr),
\]
where 
\begin{align*}
\Phi_1[f,a] &= e^{-itH}f_0 + i \int_0^t e^{-i(t - s)H} P_c\bigl\{|f + a\varphi|^2(f + a\varphi)\bigr\} \, ds, \\
\Phi_2[f,a] &= e^{it\lambda}a_0 + i \int_0^t e^{i(t - s)\lambda} \langle|f + a\varphi|^2(f + a\varphi),\varphi \rangle \, ds. \\
\end{align*}
We prove that $\Phi:B\to B$ is a contraction; its unique fixed point is then the desired solution to \eqref{NLSVsys}.

To show $\Phi : B \rightarrow B$, observe that for $(f,a) \in B$, by Proposition~\ref{Strichartz} and H\"older's inequality,
\begin{equation*}
\begin{aligned}
\|\Phi_1 &[f,a]\|_{L_t^\infty L_x^2\cap L_{t,x}^6} \\
&\lesssim \|f_0\|_{L^2} + \| |f + a\varphi|^2(f + a\varphi) \|_{L^{\frac{6}{5}}_{t,x}(I \times \R)} \\
&\lesssim \|u_0\|_{L^2} + \| f^3 \|_{L^{\frac{6}{5}}_{t,x}(I \times \R)} + \| a^3 \varphi^3 \|_{L^{\frac{6}{5}}_{t,x}(I \times \R)} \\
&\lesssim \|u_0\|_{L^2_x} + |I|^{\frac{1}{2}}\| f \|^2_{L^{6}_{t,x}(I \times \R)} \| f \|_{L^{\infty}_t L^2_x(I \times \R)} + |I|^{\frac{5}{6}} \| a \|^3_{L^{\infty}_t(I)} \| \varphi \|^3_{L^{\frac{18}{5}}_x} \\
&\lesssim M + T^{\frac{1}{2}}M^3 + T^{\frac{5}{6}}M^3\lesssim 2M
\end{aligned}
\end{equation*}
for $T = T(M)$ small. Similarly, using H\"older's inequality, we obtain
\begin{equation*}
\begin{aligned}
\|\Phi_2 [f,a]\|_{L^{\infty}_t(I \times \R)} &\lesssim \|a_0\|_{L^{\infty}_t(I)} + \int_0^t | \langle |f + a\varphi|^2(f + a\varphi), \varphi \rangle | \, ds \\
&\lesssim \|a_0\|_{L^{\infty}_t(I)} + \int_0^t \| f^3 \varphi \|_{L^1_x} + \| a^3 \varphi^4 \|_{L^1_x} \, ds\\
&\lesssim \|u_0\|_{L^2_x} + |I|^{1/2}\| f \|^3_{L^{6}_{t,x}} \| \varphi \|_{L^2_x} + |I| \| a \|^3_{L^{\infty}_t} \| \varphi \|^4_{L^4_x} \\
&\lesssim M + T^{\frac{1}{2}}M^3 + TM^3 \lesssim 2M
\end{aligned}
\end{equation*}
for $T = T(M)$ small.

We next show that $\Phi$ is a contraction.  We let $(f,a),(g,b)\in B$.  We then have
\begin{align*}
d(\Phi[f,a],\Phi [g,b]) &\lesssim  \| |f + a\varphi|^2(f + a\varphi) - |g + b\varphi|^2(g + b\varphi) \|_{L^{\frac{6}{5}}_{t,x}} \\
& \quad + \| [|f + a\varphi|^2(f + a\varphi) - |g + b\varphi|^2(g + b\varphi)] \overline{\varphi} \|_{L^{1}_{t,x}} 
\end{align*}
One can now expand out the difference 
\[
|f+a\varphi|^2(f+a\varphi) - |g+b\varphi|^2(g+b\varphi),
\]
leading to a finite collection of terms that are of the form $vw(f-g)$ or $vw(a-b)\varphi$, where $v,w\in\{f,g,a\varphi,b\varphi\}$ (up to complex conjugation). Thus, using the same spaces as in the estimates above, we can derive an estimate of the form
\[
d(\Phi[f,a],\Phi[g,b])  \lesssim T^c M^2 d((f,a),(g,b))\leq \tfrac12 d((f,a),(g,b))
\]
for $T=T(M)$ sufficiently small.

\end{proof}

\section{Iteration}\label{S:iteration}

Throughout this section, we let $V:\R\to\R$ be admissible with decay parameter $s$ and let $\delta\in(\tfrac12,\tfrac{s}{1+s})$. We choose large $v\geq 1$ and let $u:\R\times\R\to\C$ denote the solution to \eqref{NLSV} with
\[
u(0,x) = e^{ivx}\sech(x-x_0)\qtq{for some} x_0\leq -v^{1-\delta}
\]
(cf. Theorem~\ref{T:GWP}). We also define the soliton solution to \eqref{NLS} by 
\begin{equation}\label{u1}
u_1(t,x) = e^{ixv+it/2-itv^2/2}\sech(x-x_0-vt).
\end{equation}

Our goal is to control the difference
\[
w(t):=u(t)-u_1(t),
\]
which satisfies the equation
\begin{equation}\label{weq}
\begin{aligned}
i\partial_t w + \tfrac12\partial_x^2 w - Vw & = -|w|^2w -2u_1|w|^2 - \overline{u_1}w^2 \\
& \quad\quad  - 2|u_1|^2 w - u_1^2\bar w - Vu_1
\end{aligned}
\end{equation}
with $w|_{t=0}=0$. 

We will prove our main theorem by iterating the following proposition, which allows us to propagate smallness of $w$. 

\begin{proposition}\label{P:iterate} There exist $c_0,c_1,c_2>0$ so that the following holds:  Let $I=[t_a,t_b]$ with $|I|\leq c_2$.  If
\[
\|w(t_a)\|_{L^2} + \|Vu_1\|_{L_t^1 L_x^2(I\times\R)} \leq c_0,
\]
then
\[
\|w\|_{L_t^\infty L_x^2(I\times\R)} \leq c_1 \bigl[\|w(t_a)\|_{L^2} + \|Vu_1\|_{L_t^1 L_x^2(I\times\R)}\bigr].
\]
\end{proposition}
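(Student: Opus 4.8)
The plan is to run a standard Strichartz-based contraction/continuity argument for the forced equation \eqref{weq} on the short interval $I$. Write the Duhamel formula for $w$ with base point $t_a$:
\[
w(t) = e^{-i(t-t_a)H}w(t_a) + i\int_{t_a}^t e^{-i(t-s)H}\bigl[-|w|^2w - 2u_1|w|^2 - \overline{u_1}w^2 - 2|u_1|^2 w - u_1^2\bar w - Vu_1\bigr]\,ds.
\]
The subtlety flagged in the paper is that $H$ may have a negative eigenvalue, so the Strichartz estimates in Proposition~\ref{Strichartz} only apply to $P_c e^{-itH}$. As in the proof of Theorem~\ref{T:GWP}, I would split $w = P_c w + a\varphi$ with $a = \langle w,\varphi\rangle$, and estimate the continuous part via Strichartz and the (scalar) discrete part by integrating the corresponding ODE directly. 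For the continuous part I would work in the norm $\|P_c w\|_{L_t^\infty L_x^2 \cap L_{t,x}^6(I\times\R)}$; for the discrete part, $|a(t)| \leq |a(t_a)| + \int_{t_a}^t |\langle (\text{RHS}),\varphi\rangle|\,ds$, and since $\varphi \in L^1\cap L^\infty$ all the relevant pairings are controlled by the same space-time norms of $w$ and $u_1$ (using $\|u_1(t)\|_{L_x^p} \lesssim 1$ uniformly in $t$, $v$ for every $p\in[1,\infty]$, which follows from \eqref{u1} since $\sech \in L^p$).

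The key estimates, in order: (1) The linear term $e^{-i(t-t_a)H}w(t_a)$ contributes $\lesssim \|w(t_a)\|_{L^2}$ in the Strichartz norm. (2) The forcing term $Vu_1$ contributes $\lesssim \|Vu_1\|_{L_t^1 L_x^2(I\times\R)}$ via the inhomogeneous Strichartz estimate with the endpoint-type pairing $(\tilde q,\tilde r)=(\infty,2)$, so $(\tilde q',\tilde r')=(1,2)$. (3) The cubic term $|w|^2w$ is handled exactly as in Theorem~\ref{T:GWP}: $\||w|^2w\|_{L_{t,x}^{6/5}} \lesssim |I|^{1/2}\|w\|_{L_{t,x}^6}^2\|w\|_{L_t^\infty L_x^2}$, gaining a factor $|I|^{1/2}$. (4) The quadratic terms $u_1|w|^2$, $\overline{u_1}w^2$ are estimated by putting $u_1$ in $L_t^\infty L_x^6$ (bounded uniformly) and the two factors of $w$ in $L_{t,x}^6$ and $L_t^\infty L_x^2$ respectively, or a Hölder variant, again gaining a positive power of $|I|$ because $u_1$ is bounded and the time interval is finite. (5) The linear-in-$w$ terms $|u_1|^2 w$ and $u_1^2\bar w$ are the delicate ones: placing $|u_1|^2 \in L_t^\infty L_x^3$ (uniformly bounded) and $w \in L_t^\infty L_x^2 \cap L_{t,x}^6$, Hölder in space gives $u_1^2 w \in L_x^{6/5}$ with norm $\lesssim \|w(t)\|_{L_x^6}$, and then Hölder in time over $I$ costs $|I|^{5/6}$, so this term contributes $\lesssim |I|^{5/6}\|w\|_{L_{t,x}^6}$, which is absorbable for $|I|$ small. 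Assembling, one gets
\[
\|w\|_{L_t^\infty L_x^2 \cap L_{t,x}^6(I)} + \|a\|_{L_t^\infty(I)} \leq C\bigl[\|w(t_a)\|_{L^2} + \|Vu_1\|_{L_t^1 L_x^2(I)}\bigr] + C|I|^{c}\bigl(1 + \|w\|^2_{L_t^\infty L_x^2 \cap L_{t,x}^6(I)}\bigr)\|w\|_{L_t^\infty L_x^2 \cap L_{t,x}^6(I)}
\]
for some $c>0$. Choosing $c_2$ so that $C|I|^c \leq \tfrac14$ and $c_0$ so small that the quadratic-in-$w$ contribution is also absorbed, a standard continuity argument (the relevant norm is continuous in the endpoint of the interval and vanishes as the interval shrinks) closes the bootstrap and yields $\|w\|_{L_t^\infty L_x^2(I)} \lesssim \|w(t_a)\|_{L^2} + \|Vu_1\|_{L_t^1 L_x^2(I)}$, which is the claim with $c_1$ the resulting constant.

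The main obstacle is the pair of terms linear in $w$ with $u_1^2$ coefficient: they cannot be absorbed by smallness of the data (they have no extra $w$ factor), so one genuinely needs the shortness of $I$ — this is the structural reason the final theorem must be proved by iterating over $\sim\log v$ short intervals rather than in one shot. A secondary technical point is bookkeeping the bound state: one must check that the coupling term $\langle u_1^2\bar w,\varphi\rangle$ and its relatives are likewise controlled by $|I|^{c}$ times the $w$-norms, using only $\varphi \in L^1\cap L^\infty$ and the uniform $L^p$ bounds on $u_1$; this is routine but should be stated. Everything else is a direct transcription of the well-posedness estimates already carried out in the proof of Theorem~\ref{T:GWP}.
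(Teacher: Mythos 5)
Your proposal is correct and follows essentially the same route as the paper: decompose $w=P_cw+\langle w,\varphi\rangle\varphi$, estimate the continuous part via Proposition~\ref{Strichartz} in $L_t^\infty L_x^2\cap L_{t,x}^6$ and the discrete part via the scalar ODE using $\varphi\in L^1\cap L^\infty$, gain a positive power of $|I|$ on the terms linear in $w$, and close with a continuity argument. (The only quibble is a harmless H\"older bookkeeping slip in step (5) --- pairing $|u_1|^2\in L_x^3$ with $w\in L_x^6$ does not land in $L_x^{6/5}$; one should take $|u_1|^2\in L_x^{3/2}$ there, which is equally available since $u_1$ is uniformly bounded in every $L^p$.)
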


\begin{proof} We let $c_0,c_1,c_2>0$ to be determined below. We will estimate $w$ in the space
\[
X=L^{\infty}_tL^2_x(I \times \R) \cap L_{t,x}^6(I \times \R).
\]

We decompose $w$ according to the spectrum of $H$, writing
\[
w(t) = P_d w(t) + P_c w(t),\quad P_d w(t) = \langle w(t),\varphi\rangle\varphi,
\]
where $\varphi$ is the linear bound state of $H$

We treat the component in the continuous spectrum by applying the Strichartz estimate (Proposition~\ref{Strichartz}).  Using the Duhamel formula for \eqref{weq}, Proposition~\ref{Strichartz}, H\"older's inequality, and and imposing $c_2\leq 1$, we obtain
\begin{equation}\label{Pc-part}
\begin{aligned}
\|P_c w\|_{X} & \lesssim \|w(t_a)\|_{L^2} + \|w^3\|_{L_{t,x}^{\frac65}} + \|u_1 w^2\|_{L_{t,x}^{\frac65}} + \|u_1^2 w\|_{L_{t,x}^{\frac65}} + \|Vu_1\|_{L_t^1 L_x^2} \\
& \lesssim \|w(t_a)\|_{L^2}+\|Vu_1\|_{L_t^1 L_x^2} + |I|^{\frac23}\|u_1\|_{L_t^\infty L_x^6}\|u_1\|_{L_t^\infty L_x^2}\|w\|_{L_{t,x}^6} \\
&\quad + |I|^{\frac12}\|w\|_{L_{t,x}^6}^2\bigl\{\|w\|_{L_t^\infty L_x^2} + \|u_1\|_{L_t^\infty L_x^2}\} \\
& \lesssim \|w(t_a)\|_{L^2}+\|Vu_1\|_{L_t^1 L_x^2}+c_2^{\frac23}\|w\|_{X}+\|w\|_X^2 + \|w\|_X^3. 
\end{aligned}
\end{equation}

We next consider the component in the discrete spectrum.  We observe that by the Duhamel formula for \eqref{weq}, we may write
\[
P_d w(t) = e^{i\lambda(t-t_a)}P_dw(t_a) -i\int_{t_a}^t e^{i\lambda(t-s)}P_d\bigl\{|w|^2 w + \mathcal{O}(w^2 u_1)+\mathcal{O}(w u_1^2)- Vu_1\bigr\}\,ds. 
\]
We next note that by the definition of $P_d$, H\"older's inequality, and the fact that $\varphi\in L^1\cap L^\infty$, we have
\[
P_d: L^p\to L^q \qtq{for all} 1\leq p,q\leq\infty. 
\]
Indeed,
\[
\|P_d f\|_{L^q} = |\langle f,\varphi\rangle|\, \|\varphi\|_{L^q} \lesssim \|f\|_{L^p} \|\varphi\|_{L^{q'}}\|\varphi\|_{L^q}\lesssim \|f\|_{L^p}. 
\]

It follows that $r\in \{2,6\}$ and $t\in I$, we have
\begin{align*}
\|P_d w(t)\|_{L^r} & \lesssim \|P_d w(t_a)\|_{L^r}+\int_{t_a}^t \|P_d\bigl\{|w|^2 w + \mathcal{O}(w^2 u_1)+\mathcal{O}(w u_1^2)- Vu_1\bigr\}\|_{L^r} \,ds \\
& \lesssim \|w(t_a)\|_{L^2} + \| |w|^2 w\|_{L_t^1 L_x^{\frac65}} + \| w^2 u_1\|_{L_t^1 L_x^{\frac65}} + \|w u_1^2 \|_{L_t^1 L_x^{\frac65}} + \|Vu_1\|_{L_t^1 L_x^2} \\
& \lesssim \|w(t_a)\|_{L^2} + \|Vu_1\|_{L_t^1 L_x^2} + |I|^{\frac56} \|u_1\|_{L_t^\infty L_x^6} \|u_1\|_{L_t^\infty L_x^2} \|w\|_{L_{t,x}^6} \\
& \quad + |I|^{\frac23}\|w\|_{L_{t,x}^6}^2\{\|w\|_{L_t^\infty L_x^2} + \|u_1\|_{L_t^\infty L_x^2}\} \\
& \lesssim \|w(t_a)\|_{L^2} + \|Vu_1\|_{L_t^1 L_x^2} + |c_2|^{\frac56} \|w\|_X + \|w\|_X^2 +\|w\|_X^3,
\end{align*}
so that
\[
\|P_d w\|_{L_t^\infty L_x^r} \lesssim \|w(t_a)\|_{L^2} + \|Vu_1\|_{L_t^1 L_x^2} + |c_2|^{\frac56} \|w\|_X + \|w\|_X^2 +\|w\|_X^3,\quad r\in\{2,6\}.
\]

Using the fact that $|I|\leq 1$, we now observe
\[
\|P_d w\|_{X} \lesssim \|P_d w\|_{L_t^\infty L_x^2} + |I|^{\frac16}\|P_d w\|_{L_t^\infty L_x^6} \lesssim \|P_d w\|_{L_t^\infty L_x^2}+\|P_d w\|_{L_t^\infty L_x^6}.
\]
Thus we obtain
\begin{equation}\label{Pd-part}
\|P_dw\|_X \lesssim \|w(t_a)\|_{L^2} + \|Vu_1\|_{L_t^1 L_x^2} + |c_2|^{\frac56} \|w\|_X + \|w\|_X^2 + \|w\|_X^3.
\end{equation}

Combining \eqref{Pc-part} and \eqref{Pd-part} and choosing $c_2$ sufficiently small, we derive that 
\[
\|w\|_X \leq C\{\|w(t_a)\|_{L^2}+\|Vu_1\|_{L_t^1 L_x^2}\} + C\{\|w\|_X^2+\|w\|_X^3\}
\]
for some $C>0$. Using a standard continuity argument, we find that if $c_0$ is sufficiently small, then
\[
\|w\|_X \leq 2C\{\|w(t_a)\|_{L^2}+\|Vu_1\|_{L_t^1 L_x^2}\},
\]
which implies the result with $c_1=2C$. \end{proof}

We will use the following straightforward estimate to control the soliton-potential interaction.

\begin{lemma}\label{L:error} For $s>\tfrac12$, we have
\[
\| e^{-|x-y|}\langle x\rangle^{-s}\|_{L^2} \lesssim \langle y\rangle^{-s}. 
\]
\end{lemma}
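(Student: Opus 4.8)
The plan is to reduce the claimed bound to two regimes according to the size of $|y|$ and exploit the exponential decay of the factor $e^{-|x-y|}$ in one regime and the algebraic decay of $\langle x\rangle^{-s}$ in the other. First I would fix $y\in\R$ and split the integral $\int_\R e^{-2|x-y|}\langle x\rangle^{-2s}\,dx$ over the two sets $A=\{x:|x-y|\le \tfrac12|y|\}$ and $B=\{x:|x-y|>\tfrac12|y|\}$ (trivially modified near $y=0$, where the statement is immediate since $\langle y\rangle\sim 1$ and $\|e^{-|x-y|}\langle x\rangle^{-s}\|_{L^2}\lesssim \|e^{-|x-y|}\|_{L^2}\lesssim 1$).

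On the set $A$, the point $x$ is within distance $\tfrac12|y|$ of $y$, so $|x|\ge |y|-\tfrac12|y|=\tfrac12|y|$ and hence $\langle x\rangle^{-2s}\lesssim \langle y\rangle^{-2s}$; pulling this factor out and using $\int_\R e^{-2|x-y|}\,dx=1$ gives a contribution bounded by $\langle y\rangle^{-2s}$. On the set $B$, we use $e^{-2|x-y|}\le e^{-|y|}$ (since $|x-y|>\tfrac12|y|$) and bound $\int_B e^{-|x-y|}\langle x\rangle^{-2s}\,dx \le e^{-|y|}\int_\R \langle x\rangle^{-2s}\,dx$, which is finite precisely because $2s>1$, i.e. $s>\tfrac12$; since $e^{-|y|}\lesssim \langle y\rangle^{-2s}$ for all $y$, this contribution is also $\lesssim \langle y\rangle^{-2s}$. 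Adding the two contributions and taking square roots yields $\|e^{-|x-y|}\langle x\rangle^{-s}\|_{L^2}\lesssim \langle y\rangle^{-s}$.

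There is no real obstacle here; the only mild point to be careful about is the choice of splitting threshold and handling the bounded range of $y$ (say $|y|\le 1$) separately so that the implied constants are uniform, and making sure the exponent condition $s>\tfrac12$ is used exactly where the $L^1$-integrability of $\langle x\rangle^{-2s}$ is invoked. One could alternatively prove this by writing $e^{-|x-y|}\langle x\rangle^{-s}\le e^{-|x-y|/2}\cdot e^{-|x-y|/2}\langle x\rangle^{-s}$ and noting $e^{-|x-y|/2}\lesssim \langle x-y\rangle^{-s}$ followed by the standard convolution-type inequality $\langle x-y\rangle^{-s}\langle x\rangle^{-s}\lesssim$ something summable, but the direct two-region split is cleaner and self-contained.
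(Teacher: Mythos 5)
Your proof is correct and follows essentially the same route as the paper: the same two-region split according to whether $|x-y|\le\tfrac12|y|$ or not, using $|x|\ge\tfrac12|y|$ on the near region and the exponential factor together with the $L^2$-integrability of $\langle x\rangle^{-s}$ (this is where $s>\tfrac12$ enters) on the far region. Working with the squared integral rather than the norm directly is an immaterial difference.
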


\begin{proof} We begin by estimating
\begin{align}
\| e^{-|x-y|}\langle x\rangle^{-s}\|_{L^2} & \lesssim \| e^{-|x-y|}\langle x\rangle^{-s}\|_{L^2(|x-y|\leq \frac12|y|)} \label{ex1}\\
& \quad + \| e^{-|x-y|}\langle x\rangle^{-s}\|_{L^2(|x-y|>\frac12|y|)}.\label{ex2} 
\end{align}
For the term \eqref{ex1}, we have $|x|\geq\tfrac12|y|$, so that
\[
\eqref{ex1} \lesssim \langle y\rangle^{-s}\|e^{-|x-y|}\|_{L^2} \lesssim \langle y\rangle^{-s}.
\]
For the term \eqref{ex2}, we have 
\[
\eqref{ex2} \lesssim e^{-\frac12|y|}\|\langle x\rangle^{-s}\|_{L^2}\lesssim e^{-\frac12|y|}\lesssim \langle y\rangle^{-s}.
\]
\end{proof}

We turn now to the proof of Theorem~\ref{T}.

\begin{proof}[Proof of Theorem~\ref{T}] We let $u,u_1,s$, and $\delta$ be as above and recall the constants $c_0,c_1$, and $c_2$ from Proposition~\ref{P:iterate}.  Following the framework of \cite{holmerFastSolitonScattering2007}, the proof proceeds in three phases.

\emph{Phase 1.}  Without loss of generality, we may choose an integer $N\geq 1$ such that
\[
Nc_2 = T_1 := \tfrac{|x_0|}{v}-v^{-\delta}.
\]
We will prove that
\begin{equation}\label{phase1}
\|u-u_1\|_{L_t^\infty L_x^2([0,T_1]\times\R)} \lesssim v^{-s(1-\delta)}. 
\end{equation}

We define the intervals 
\[
I_k=[kc_2,(k+1)c_2],\quad k=0,\dots,N-1.
\]
By \eqref{u1}, Definition~\ref{D:admissible}, and Lemma~\ref{L:error}, we have that 
\begin{align}
\|Vu_1\|_{L_t^1 L_x^2(I_k\times\R)} &\lesssim c_2 \| e^{-|x-[x_0+tv]|}\langle x\rangle^{-s}\|_{L_t^\infty L_x^2(I_k\times\R)} \nonumber \\
& \lesssim \|\langle x_0+tv\rangle^{-s}\|_{L_t^\infty(I_k)} \nonumber \\
& \leq C\langle x_0+(k+1)c_2v\rangle^{-s} \label{Vu1-phase1}
\end{align}
for all $k=0,\dots,N-1$. We now define the constants $\{A_k\}_{k=0}^{N-1}$ inductively via
\[
A_0 = Cc_1\langle x_0+c_2v\rangle^{-s},\quad A_{k+1}=c_1\bigl[A_k+C\langle x_0+(k+2)c_2 v\rangle^{-s}\bigr],
\]
so that 
\[
A_k = Cc_1^{k+2}\sum_{\ell=1}^{k+1}c_1^{-\ell}\langle x_0+\ell c_2 v\rangle^{-s}.
\]
We observe that by definition of $T_1$, we have
\[
\langle x_0+\ell c_2 v\rangle^{-s}\lesssim v^{-s(1-\delta)} \qtq{for all}\ell=1,\dots,N,
\]
which implies that
\begin{equation}\label{Akbd}
A_k\lesssim v^{-s(1-\delta)} \qtq{for all}k=1,\dots,N-1. 
\end{equation}
We will prove by induction that
\begin{equation}\label{induct1}
\|w\|_{L_t^\infty L_x^2(I_k\times \R)}\leq A_k
\end{equation}
for $k=0,\dots,N-1$, which (together with \eqref{Akbd}) yields \eqref{phase1}.  The plan is to iterate Proposition~\ref{P:iterate}. 

We turn to the induction argument. For the base case, we use the fact that $w(0)=0$ and 
\[
\|Vu_1\|_{L_t^1 L_x^2(I_0\times\R)} \leq C\langle x_0+c_2v\rangle^{-s} \lesssim v^{-s(1-\delta)}<c_0
\]
for $v$ sufficiently large. Thus, we may apply Proposition~\ref{P:iterate} on $I_0$ to obtain
\[
\|w\|_{L_t^\infty L_x^2(I_0\times\R)} \leq c_1\|Vu_1\|_{L_t^1 L_x^2(I_0\times\R)}\leq Cc_1\langle x_0+c_2 v\rangle^{-s} = A_0,
\]
which gives the base case. 

Now suppose that \eqref{induct1} holds up to level $k$ for some $k<N-1$.  In particular, recalling \eqref{Vu1-phase1} and \eqref{Akbd}, 
\[
\|w((k+1)c_2)\|_{L^2} + \|Vu_1\|_{L_t^1 L_x^2(I_{k+1}\times\R)}\leq A_k + C\langle x_0+(k+2)c_2v\rangle^{-s} \lesssim v^{-s(1-\delta)}<c_0
\]
for $v$ large.  Thus, we may apply Proposition~\ref{P:iterate} on $I_{k+1}$ to obtain
\[
\|w\|_{L_t^\infty L_x^2(I_{k+1}\times\R)} \leq c_1[A_k+C\langle x_0+(k+2)c_2v\rangle^{-s}] = A_{k+1},
\]
which completes the induction and hence the proof of \eqref{phase1}.

\emph{Phase 2.} We recall $T_1=\tfrac{|x_0|}{v}-v^{-\delta}$ and define
\[
T_2:=\tfrac{|x_0|}{v}+v^{-\delta}.
\]
We will prove that
\begin{equation}\label{phase2}
\|u-u_1\|_{L_t^\infty L_x^2([T_1,T_2]\times\R)}\lesssim v^{-\delta}.
\end{equation}
via an application of Proposition~\ref{P:iterate}.  To this end, first observe that 
\[
|[T_1,T_2]|=2v^{-\delta}\leq c_2
\]
for $v$ sufficiently large. Next, we have from \eqref{phase1} that
\[
\|w(T_1)\|_{L^2} \lesssim v^{-s(1-\delta)}<\tfrac12 c_0
\]
for $v$ large. Furthermore, by H\"older's inequality,
\begin{align*}
\|Vu_1\|_{L_t^1 L_x^2([T_1,T_2]\times\R)} & \lesssim v^{-\delta} \|V\|_{L_x^2}\|u_1\|_{L_{t,x}^\infty} <\tfrac12 c_0
\end{align*}
for $v$ large.  Thus we may apply Proposition~\ref{P:iterate} to obtain
\begin{align*}
\|w\|_{L_t^\infty L_x^2([T_1,T_2]\times\R)} & \lesssim \|w(T_1)\|_{L^2}+\|Vu_1\|_{L_t^1 L_x^2([T_1,T_2]\times\R)} \\
& \lesssim v^{-s(1-\delta)}+v^{-\delta} \lesssim v^{-\delta}
\end{align*}
(cf. $\delta<\tfrac{s}{1+s}$), which yields \eqref{phase2}. 

\emph{Phase 3.} We argue similarly to Phase 1. Without loss of generality, we may choose an integer $M\geq 1$ so that
\[
Mc_2 = (1-\delta)\log v
\]
and let 
\[
T_3 := T_2+(1-\delta)\log v =  \tfrac{|x_0|}{v}+v^{-\delta}+(1-\delta)\log v. 
\]
We will prove that
\begin{equation}\label{phase3}
\|u-u_1\|_{L_t^\infty L_x^2([T_2,T_3]\times\R)}\lesssim v^{-[2\delta-1]}.
\end{equation}

We define the intervals 
\[
J_k=[T_2+kc_2,T_2+(k+1)c_2],\quad k=0,\dots,M-1.
\]

By \eqref{u1}, Definition~\ref{D:admissible}, and Lemma~\ref{L:error}, we have
\begin{align}
\|Vu_1\|_{L_t^1 L_x^2(J_k\times\R)} & \lesssim c_2 \| e^{-|x-[x_0+tv]|}\langle x\rangle^{-s}\|_{L_t^\infty L_x^2(J_k\times \R)} \nonumber \\
& \lesssim \|\langle x_0+tv\rangle^{-s}\|_{L_t^\infty(J_k)} \\
& \leq C\langle x_0+(T_2+kc_2)v\rangle^{-s} \label{Vu1-phase3}
\end{align}
for all $k=0,\dots,M-1.$  We now define constants $\{B_k\}_{k=0}^{M-1}$ inductively via
\[
B_0 = Cc_1v^{-\delta},\quad B_{k+1}=c_1\bigl[[B_k+C\langle x_0+[T_2+(k+1)c_2]v\rangle^{-s}\bigr],
\]
so that
\[
B_k = Cc_1^{k+1}v^{-\delta} + Cc_1^{k+1}\sum_{\ell=1}^k c_1^{-\ell}\langle x_0+(T_2+\ell c_2)v\rangle^{-s}. 
\]

We observe that by definition of $T_2$, we have
\[
\langle x_0+(T_2+\ell c_2)\rangle^{-s} \lesssim v^{-s(1-\delta)} \qtq{for all}\ell=0,\dots,N-1.
\]
Thus, recalling that $c_2 M = (1-\delta)\log v$ and $\delta<\tfrac{s}{s+1}$, for $v$ sufficiently large we have
\begin{equation}\label{Bkbd}
B_k \lesssim v^{-[2\delta-1]}
\end{equation}

We will prove by induction that 
\begin{equation}\label{induct2}
\|w\|_{L_t^\infty L_x^2(J_k\times\R)}\leq B_k
\end{equation}
for $k=0,\dots,M-1$, which (together with \eqref{Bkbd}) yields \eqref{phase3}.  Once again, the plan is to iterate Proposition~\ref{P:iterate}.

We turn to the induction argument.  For the base case, enlarging the constant $C$ if necessary, we have from \eqref{phase2} that
\[
\|w(T_2)\|_{L^2} \leq \tfrac12 Cv^{-\delta}<\tfrac12 c_0
\]
for large $v$.  Furthermore, \eqref{Vu1-phase3} yields
\[
\|Vu_1\|_{L_t^1 L_x^2(J_0\times\R)}\lesssim \langle x_0+T_2v\rangle^{-s} \lesssim v^{-s(1-\delta)}<\tfrac12 c_0
\]
for $v$ large. Thus we may apply Proposition~\ref{P:iterate} on $J_0$ to obtain
\begin{align*}
\|w\|_{L_t^\infty L_x^2(J_0\times\R)} & \leq c_1[\|w(T_2)\|_{L^2}+\|Vu_1\|_{L_t^1 L_x^2(J_0\times\R)}] \\
&\leq c_1[\tfrac12 C v^{-\delta} + \tfrac12 C v^{-\delta}] \leq Cc_1 v^{-\delta}=B_0,
\end{align*}
where we have used $\delta<\tfrac{s}{s+1}$ and enlarged the constant $C$ if necessary. 

Now suppose that \eqref{induct2} holds up to level $k$ for some $k<M-1$.  In particular, recalling \eqref{Vu1-phase3} and \eqref{Bkbd}, we have
\begin{align*}
\|w((k+1)c_2)\|_{L^2}+\|Vu_1\|_{L_t^1 L_x^2(J_{k+1}\times\R)} &\leq B_k + C\langle x_0+[T_2+(k+1)c_2]v\rangle^{-s} \\
& \lesssim v^{-(2\delta-1)} + v^{-s(1-\delta)} < c_0
\end{align*}
for $v$ large.  Thus we may apply Proposition~\ref{P:iterate} on $J_{k+1}$ to obtain
\[
\|w\|_{L_t^\infty L_x^2(J_{k+1}\times\R)}\leq c_1[B_k + C\langle x_0+[T_2+(k+1)c_2]v\rangle^{-s}] = B_{k+1},
\]
which completes the induction and hence the proof of \eqref{phase3}. 

\emph{Conclusion.} Combining phases 1--3, we deduce that
\[
\|u-u_1\|_{L_t^\infty L_x^2([0,(1-\delta)\log v])} \lesssim v^{-s(1-\delta)}+ v^{-\delta} + v^{-(2\delta-1)} \lesssim v^{-(2\delta-1)},
\]
which completes the proof of Theorem~\ref{T}. 
\end{proof}


\bibliography{References}

\end{document}